\theoremstyle{plain}
\newtheorem{thm}{Theorem}[section]
\theoremstyle{definition}
\newtheorem{ex}[thm]{Example}
\newtheorem{prop}[thm]{Proposition}
\numberwithin{equation}{section}
\begin{document}

\title[Elliptic systems in exterior domains]{ Coupled elliptic systems   depending on the gradient  
with nonlocal BCs in exterior domains}
\author[F. Cianciaruso]{Filomena Cianciaruso}%
\address{Filomena Cianciaruso, Dipartimento di Matematica e Informatica, Universit\`{a}
della Calabria, 87036 Arcavacata di Rende, Cosenza, Italy}
\email{filomena.cianciaruso@unical.it}
\author[L. Muglia]{Luigi Muglia}%
\address{Luigi Muglia, Dipartimento di Matematica e Informatica, Universit\`{a} della
Calabria, 87036 Arcavacata di Rende, Cosenza, Italy}
\email{muglia@mat.unical.it}
\author[P. Pietramala]{Paolamaria Pietramala}%
\address{Paolamaria Pietramala, Dipartimento di Matematica e Informatica, Universit\`{a}
della Calabria, 87036 Arcavacata di Rende, Cosenza, Italy}
\email{pietramala@unical.it}
\subjclass[2010]{Primary 35J66, secondary 45G15}
\keywords{Elliptic system, dependence on the gradient, nonlocal boundary conditions, fixed point index, cone, positive solution.}
\begin{abstract}
We study existence and multiplicity of positive radial solutions for a coupled elliptic system in exterior domains where the  nonlinearities depend  on the gradients and the boundary conditions are nonlocal. We use a nonstandard cone to establish existence of solutions by means of  fixed point index theory. 
\end{abstract}
\maketitle
\section{Introduction}
In this paper, we study existence and multiplicity of  positive radial solutions for the coupled elliptic system
\begin{equation} \label{do}
\begin{cases}
-\Delta u= h_1(|x|) f_1(u,v,|\nabla u|,|\nabla v|),\,\,\,\,\, |x|\in [r_0,+\infty), \\
-\Delta v= h_2(|x|) f_2(u,v,|\nabla u|,|\nabla v|),\,\,\,\,\, |x|\in [r_0,+\infty), \\
\displaystyle \lim_{|x|\to\infty}u(x)=\alpha_{1}[u],\, \,\,\,\,\,\displaystyle c_1u+\tilde{d_1}\frac{\partial u}{\partial r}=\beta_{1}[u] \,\,\text{ for }|x|=r_0,\\
\displaystyle\lim_{|x|\to\infty}v(x)=\alpha_{2}[v],\,\,\,\,\,\,\displaystyle c_2v+\tilde{d_2}\frac{\partial v}{\partial r}=\beta_{2}[v] \,\,\text{ for }|x|=r_0,
\end{cases}
\end{equation}
where $\alpha_i[\cdot],\, \beta_i[\cdot]$ are bounded linear functionals, $h_i$ and $f_i$ are nonnegative functions, $c_i\geq 0,\,\tilde{d}_i \leq 0$, $r_0>0$ and $\dfrac{\partial}{\partial r}$ denotes (as
in ~\cite{nirenberg}) differentiation in the radial direction
$r=|x|$. The functions $f_i$ are continuous and every singularity is captured by the term $h_i\in L^1$ that may have pointwise singularities.\\
Many papers are aimed to the existence of radial solutions of elliptic equations in the exterior part of a ball. A variety of methods has been used,  for instance, when  the boundary conditions (BCs) are \emph{homogeneous},  a priori estimates were utilized by Castro et al.~\cite{shi5},  sub and super solutions were used by Djedali and Orpel~\cite{orpel3} and Sankar et al.~\cite{shi4};  variational methods were used by Orpel~\cite{orpel2} and topological methods where employed by  Abebe and co-authors~\cite{abeshi}, do \'O et al.~\cite{do1}, Hai and Shivaji \cite{hai-shi}, Han and Wang~\cite{han}, Lee~\cite{lee1}, Orpel~\cite{orpel} and Stanczy~\cite{sta}.\\

In particular, in a very recent paper \cite{hai-shi}, Hai and Shivaji  proved existence and multiplicity of positive radial solutions for the superlinear ellptic  system 
\begin{equation*}\label{shi-syst}
\begin{cases}
-\Delta u=\lambda h_1(|x|) f_1(v), &|x|\in [r_0,+\infty), \\
-\Delta v=\lambda h_2(|x|) f_2(u), &|x|\in [r_0,+\infty), \\
\displaystyle \lim_{|x|\to\infty}u(x)=0,\, \,\,\displaystyle d_1\frac{\partial u}{\partial n}+\tilde{c}_1(u)u=0 \,\,\text{ for }&|x|=r_0,\\
\displaystyle\lim_{|x|\to\infty}v(x)=0,\,\,\,d_2\frac{\partial v}{\partial n}+\tilde{c}_2(v)v=0   \,\,\text{ for }&|x|=r_0,
\end{cases}
\end{equation*}
using  a fixed point result of Krasnoselskii type applied to a suitable completely continuous integral operators on $C[0,1]\times C[0,1]$. These results seems to be the first ones proving multiplicity of positive solutions for this kind of systems. \\
On the other hand, in the context of non-homogeneous BCs, elliptic problems were studied by Aftalion and Busca ~\cite{busca},  Butler and others~\cite{but}, Cianciaruso and co-authors in \cite{genupa},  Dhanya et al.~\cite{shi2}, do \'O et al.~\cite{do2, do4, do6, do7},  Goodrich~\cite{Goodrich3, Goodrich4}, Ko and co-authors ~\cite{shi3} and Lee and others~\cite{shi1}.\\
The existence of positive radial solutions of elliptic equations with nonlinearities depending on the gradient subject to Neumann, Dirichlet or Robin
 boundary conditions has been investigated by a number of authors,  see for example Averna \cite{ave-mo-tor}, Cianciaruso and co-authors \cite{nu, nu2, nupa2}, De Figueiredo and co-authors \cite{defig-sa-ubi, defig-ubi} and Faria and co-authors \cite{fa-mi-pe}. \\
Our system \eqref{do} is really general, indeed
\begin{enumerate}
\item the nonlinearities $f_i$ as depend on the functions $u$ and $v$ as depend on their gradients; no monotonicity hypotheses are supposed about them.
\item the boundary conditions are nonlocal and can be read as some kind of feedback mechanisms and they have been deeply studied for ordinary differential equations (see, for example ~\cite{nupa, goodrich1, goodrich2, webb2, webb3}).
\end{enumerate}
In order to search for solutions of an elliptic PDE
$$
-\Delta w=g(|x|)f(w, |\nabla w|)
$$
with some boundary conditions, a topological approach is to associate, by using standard transformations, an integral operator as
$$
Sw(t)=\int_0^1 G(t,s)g(r(s))\tilde{f}(w(s), |w'(s)|)ds.$$ 
It is straightforward, in the local problems, to find the Green's function $G$ by integration and by using the BCs. However let us remark that, in the nonlocal problems, this is a long and technical calculation whose result is often a sum of terms of different signs. \\
Here, as in \cite{webb2}, we treat the nonlocal problem as the perturbation of the simpler local problem. In such a way we handle with the positivity properties of the simpler Green's function of the local problem.\\
Often the associated integral operator is studied in the cone of nonnegative functions in the space $C^1[0,1]$ or in a weighed space of differential functions as in \cite{aga-ore-yan}.  In our case and in particular when seeking for multiple solutions, it is suitable to set in a smaller cone: we will introduce a new cone in which Harnack-type inequalities are used.\\
Moreover, since we are interested to positive solutions, the functionals $\alpha_i$ and $\beta_i$ must satisfy some positivity conditions; we will not suppose this  in the whole space, but we choose to include the  requirement in the definition of the cone.\\
We show that, under suitable conditions on the nonlinear terms, the fixed point  index is $0$ on certain open bounded subsets of the cone and $1$ on others; the choice of these subsets allows us to have more freedom on the conditions of the growth of the nonlinearities. 
These conditions relate the upper and lower bounds of the nonlinearities $f_i$ on
stripes and some constants, depending on the kernel of the 
integral operator and on the nonlocal BCs, that are easy estimable  as we show in a example.  
\section{The associate integral operator}

Consider  in $\mathbb{R}^n$, $n\ge 3$, the equation
\begin{equation}\label{eqell}
-\triangle w= h(|x|)f(w,|\nabla w|), \quad |x|\in
[r_{0},+\infty).
\end{equation}
Since we are interested to radial solutions $w=w(r)$, $r=|x|$, following \cite{but}, 
 we rewrite \eqref{eqell} as
\begin{equation}\label{eqinterm}
-w''(r) - \dfrac{n-1}{r}w'(r)= h(r)f(w(r),|w'(r)|),  \,\,\,\,  r\in [r_0, +\infty).
\end{equation}

By using the transformation
\begin{equation*}
r(t):=r_0\,t^{\frac{1}{2-n}},\,\,\,\,\,\,t\in(0,1],
\end{equation*}
equation (\ref{eqinterm}) becomes 
\begin{equation*}
 w''(r(t))+g(t)\,f\left(w(r(t)), \frac{|w'(r(t))|}{|r'(t)|}\right)=0, \qquad t\in(0,1],
\end{equation*}
with $$g(t)=\displaystyle \frac{r_0^2}{(n-2)^2}t^{\frac{2n-2}{2-n}} h(r(t)).$$

Consider in $\mathbb{R}^n$ the system of boundary value problems
\begin{equation} \label{do2}
\begin{cases}
-\Delta u= h_1(|x|) f_1(u,v,|\nabla u|,|\nabla v|),\,\,\,\,\, |x|\in [r_0,+\infty), \\
-\Delta v= h_2(|x|) f_2(u,v,|\nabla u|,|\nabla v|),\,\,\,\,\, |x|\in [r_0,+\infty), \\
\displaystyle \lim_{|x|\to\infty}u(x)=\alpha_{1}[u],\, \,\,\,\,\,\displaystyle c_1u+\tilde{d_1}\frac{\partial u}{\partial r}=\beta_{1}[u] \,\,\text{ for }|x|=r_0,\\
\displaystyle\lim_{|x|\to\infty}v(x)=\alpha_{2}[v],\,\,\,\,\,\,\displaystyle c_2v+\tilde{d_2}\frac{\partial v}{\partial r}=\beta_{2}[v] \,\,\text{ for }|x|=r_0.
\end{cases}
\end{equation}

Set
$u(t)=u(r(t))$ and $v(t)=v(r(t))$. Thus to the system \eqref{do2} we associate the system of ODEs
\begin{equation}\label{1syst}
\begin{cases}
u''(t) + g_1(t) f_1\left(u(t),v(t),\displaystyle\frac{|u'(t)|}{|r'(t)|},\displaystyle\frac{|v'(t)|}{|r'(t)|}\right) = 0,\,\,\,\, t\in (0,1), \\
v''(t) + g_2(t) f_2\left(u(t),v(t),\displaystyle\frac{|u'(t)|}{|r'(t)|},\displaystyle\frac{|v'(t)|}{|r'(t)|}\right) = 0, \,\,\,\,t\in  (0,1),\\
u(0)=\alpha_1[u],\,\,\, c_1u(1)+d_1 u'(1)=\beta_{1}[u],  \\
v(0)=\alpha_{2}[v],\,\,\, c_2v(1)+d_2 v'(1)=\beta_2[v],
\end{cases}
\end{equation}
where $ \displaystyle d_{i}=\frac{r_0}{2-n}\tilde{d}_{i}$.

We study the existence of positive solutions of the system~\eqref{1syst}, by means of the associated system of perturbed Hammerstein integral equations
\begin{equation*}\label{intrsyst}
\begin{cases}
 \displaystyle u(t)=\gamma_{1}(t)\alpha_{1}[u]+\delta_{1}(t)\beta_{1}[u]+ \int_{0}^{1}k_1(t,s)g_1(s)f_1\left(u(s),v(s),\frac{|u'(s)|}{|r'(s)|},\frac{|v'(s)|}{|r'(s)|}\right)ds, \\
\displaystyle v(t)=\gamma_{2}(t)\alpha_{2}[v]+\delta_{2}(t) \beta_{2}[v]+\int_{0}^{1}k_2(t,s)g_2(s)f_2\left(u(s),v(s),\frac{|u'(s)|}{|r'(s)|},\frac{|v'(s)|}{|r'(s)|}\right)ds,%
\end{cases}
\end{equation*}
where $\gamma_{i}$ is the solution of the BVP $w''(t)=0,\,\, w(0)=1, \, c_iw(1)+d_i w'(1)=0$, namely
$$
\gamma_{i}(t)=\displaystyle 1-\frac{c_{i}t}{d_i+c_{i} };
$$
$\delta_{i}$ is the solution of the BVP $w''(t)=0,\,\, w(0)=0, \,\,c_iw(1)+d_i w'(1)=1$, namely
$$
\delta_{i}(t)=\displaystyle \frac{t}{d_i+c_{i} }
$$
and $k_i$ is the Green's function associate to homogeneous problem, in wich  $\alpha_i[w]=\beta_i[w]=0$, namely 
\begin{equation*}\label{ki}
k_i(t,s):=
\begin{cases}\displaystyle 
s\left(1-\frac{c_{i} t}{d_i+c_{i}}\right), &  s \le t,\cr
 \displaystyle
 t\left(1-\frac{c_{i} s}{d_i+c_{i}}\right),&s>t.
\end{cases}
\end{equation*}
In the following Proposition we resume the properties of the functions $\gamma_i,\,\delta_i$ and $k_i$  that will be useful in the sequel.
\begin{prop} We have, for $i=1,2$,
\begin{itemize}
\item  The functions $\gamma_i,\,\delta_i$ are in $C^1 [0,1] $; moreover, for $t \in[a_i,b_i] \subset(0,1)$, with $a_i+b_i<1$,
 $$\|\gamma_{i}\|_\infty=1\text{ and }\gamma_{i}(t) \geq 1-t\geq1-b_i= (1-b_i)\|\gamma_{i}\|_\infty >a_i\|\gamma_{i}\|_\infty;$$
 $$\|\delta_{i}\|_\infty=\displaystyle\frac{1}{d_i+c_{i}}\,\,\, \text{ and  }\delta_{i}(t)=t\|\delta_{i}\|_\infty\geq  a_i\|\delta_{i}\|_\infty.$$
\item The kernels $k_i$ are nonnegative and continuous in  $[0,1]\times  [0,1]$. Moreover, for $t\in [a_i,b_i] $, we have
\begin{align*}
&k_i(t,s) \leq \phi_{i}(s)\ \text{ for } (t,s)\in [0,1] \times [0,1] \text{ and }\\
&k_i(t,s) \geq a_i\, \phi_{i}(s) \text{ for }(t,s)\in
[a_i,b_i]\times [0,1], 
\end{align*}
with
$\phi_{i}(s):=s\displaystyle\left(1-\frac{c_{i} }{d_i+c_{i}}s\right)$.
\end{itemize}
 \end{prop}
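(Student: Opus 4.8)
The plan is to reduce every assertion to the single structural fact that the sign hypotheses force $d_i\ge 0$ and $d_i+c_i>0$ (the latter being implicit in the very formulas, which have $d_i+c_i$ in the denominator), so that the ratio $\frac{c_i}{d_i+c_i}$ lies in $[0,1]$. Indeed, from $c_i\ge 0$, $\tilde d_i\le 0$, $r_0>0$ and $n\ge 3$ one reads off $d_i=\frac{r_0}{2-n}\tilde d_i\ge 0$. Once this is recorded, all the estimates become elementary consequences of the explicit affine/piecewise-affine expressions for $\gamma_i,\delta_i,k_i$.

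For $\gamma_i$ and $\delta_i$ the verification is direct. Both are affine in $t$, hence of class $C^1[0,1]$. Since $\frac{c_i}{d_i+c_i}\ge 0$ the function $\gamma_i$ is nonincreasing, so $\|\gamma_i\|_\infty=\gamma_i(0)=1$; moreover $\frac{c_i}{d_i+c_i}\le 1$ gives $\gamma_i(t)=1-\frac{c_i}{d_i+c_i}t\ge 1-t$, and for $t\le b_i$ this is $\ge 1-b_i$, where $1-b_i>a_i$ is exactly the hypothesis $a_i+b_i<1$. Likewise $\delta_i(t)=\frac{t}{d_i+c_i}$ is nondecreasing, whence $\|\delta_i\|_\infty=\delta_i(1)=\frac{1}{d_i+c_i}$ and $\delta_i(t)=t\|\delta_i\|_\infty\ge a_i\|\delta_i\|_\infty$ on $[a_i,b_i]$.

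For $k_i$, continuity follows because the two branches agree on the diagonal $s=t$, and nonnegativity follows from $1-\frac{c_i t}{d_i+c_i}\ge \frac{d_i}{d_i+c_i}\ge 0$ for $t\in[0,1]$. To obtain the upper bound I would fix $s$ and analyze $t\mapsto k_i(t,s)$: it is nondecreasing on $[0,s]$ and nonincreasing on $[s,1]$, so it attains its maximum at $t=s$, and $k_i(s,s)=\phi_i(s)$, giving $k_i(t,s)\le\phi_i(s)$ on $[0,1]\times[0,1]$.

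The only genuinely delicate point is the Harnack-type lower bound $k_i(t,s)\ge a_i\,\phi_i(s)$ on $[a_i,b_i]\times[0,1]$, which I would establish by splitting on the position of $s$ relative to $t$. When $s>t$ the common factor $1-\frac{c_i s}{d_i+c_i}$ cancels and $k_i(t,s)/\phi_i(s)=t/s\ge a_i$, since $t\ge a_i$ and $s\le 1$. When $s\le t$ the ratio equals $\frac{d_i+c_i(1-t)}{d_i+c_i(1-s)}$; bounding the numerator below by putting $t=b_i$ and the denominator above by putting $s=0$ yields $k_i(t,s)/\phi_i(s)\ge 1-\frac{c_i b_i}{d_i+c_i}\ge 1-b_i>a_i$, where the middle inequality uses $\frac{c_i}{d_i+c_i}\le 1$ and the last one uses $a_i+b_i<1$. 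The borderline case $\phi_i(s)=0$, which can occur only at $s\in\{0,1\}$ when $d_i=0$, is trivial since then $k_i(t,s)=0$ as well. I expect this case split, together with the repeated use of $\frac{c_i}{d_i+c_i}\in[0,1]$ and the geometric condition $a_i+b_i<1$, to be the crux of the argument.
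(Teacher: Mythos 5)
Your proof is correct: the key observation $d_i=\frac{r_0}{2-n}\tilde d_i\ge 0$ (hence $\frac{c_i}{d_i+c_i}\in[0,1]$) and the case split on $s\lessgtr t$ for the Harnack-type bound are exactly what is needed, and each step checks out, including the degenerate case $\phi_i(s)=0$. The paper states this proposition without proof (it is presented as a summary of standard properties), so there is no argument to compare against; your direct verification supplies precisely the routine computations the authors omitted.
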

\smallskip

\noindent
Let  $\omega(t)=t(1-t)$. Our setting will be  the Banach space (see \cite{aga-ore-yan}) 
$$
C_{\omega}^1[0,1]=\{w \in C[0,1]\cap C^1(0,1) :\,\,\sup_{t \in (0,1)}\omega(t)|w'(t)|<+\infty\}
$$
endowed with the norm
$$
|| w||: =\max \left\{ || w|| _{\infty},\|w'\|_{\omega}\right\},
$$
where $|| w|| _{\infty}:=\underset{t\in [ 0,\,1]\,}{\max }|w(t)|$ and $\|w'\|_{\omega}:=\displaystyle\sup_{t\in (0,1)}\omega(t)|w'(t)|$.\\

For $i=1,2$, fixed $[a_i,b_i] \subset(0,1)$ such that $a_i+b_i<1$, we  consider the
 cones 
\begin{equation*}
K_{i}:=\left\{ w\in C_{\omega}^{1}[0,\,1]:w\geq 0,\,\underset{t\in [ a_i,b_i]}%
{\min }w(t)\geq a_i || w|| _{\infty},\,\,   \|w'\|_{\omega}\leq4w(1/2),\,\,\alpha_i[w]\geq 0,\,\beta_i[w]\geq 0\right\},
\end{equation*}
in $C_{\omega}^{1}[0,1]$ and the cone $$K:=K_{1}\times K_{2}$$ in $C_{\omega}^{1}[0,1]\times C_{\omega}^{1}[0,1]$.\\
Note that the functions in $K_{i}$ are strictly positive on the
sub-interval $[a_i,b_i]$ and that for $w \in K_{i}$ the inequalities $\|w\|_{\omega}\leq\|w\|\le 4 \|w\|_{\infty}$ hold.\\

From now on, we will assume that, for $i=1,2$,
\begin{itemize}
\item $f_i:[0,+\infty)\times [0,+\infty)\times\mathbb{R}\times \mathbb{R} \to[0,+\infty)$  is continuous;
\item  $h_i:[r_0,+\infty) \to [0,+\infty)$ is continuous  and $h_i(r)\leq \frac{1}{r^{n+\mu_i}}$ for $r\to +\infty$ for some $\mu_i>0$.
\item $0\leq \alpha_i[\gamma_i]<1$ \text{ and }$0\leq \beta_i[\delta_i]<1$;
 \item $\alpha_i[k_i]:= \alpha_i[k_i(\cdot,s)] \text{ and }\beta_i[k_i]:=\beta_i[k_i(\cdot,s)]$ are nonnegative numbers;
\item $D_i=(1-\alpha_i[\gamma_i])(1-\beta_i[\delta_i])-\alpha_i[\delta_i]\beta_i[\gamma_i]>0$.
\end{itemize}

Set
\begin{equation*}
F_i(u,v)(t):=\int_{0}^{1}k_i(t,s)g_i(s)f_i\left(u(s),v(s),\frac{|u'(s)|}{|r'(s)|},\frac{|v'(s)|}{|r'(s)|}\right)ds,
\end{equation*}
where $g_i(t)=\displaystyle \frac{r_0^2}{(n-2)^2}t^{\frac{2n-2}{2-n}} h_i(r(t))$;
the well-defined integral operator $T:C_{\omega}^{1}[0,1]\times C_{\omega}^{1}[0,1]\to C_{\omega}^{1}[0,1]\times C_{\omega}^{1}[0,1]$ given by 
\begin{gather}
\begin{aligned}\label{opT}
T(u,v)(t):=
\left(
\begin{array}{c}
\gamma_{1}(t)\alpha_{1}[u]+\delta_{1}(t)\beta_{1}[u]+F_1(u,v)(t) \\
\gamma_{2}(t)\alpha_{2}[v]+\delta_{2}(t) \beta_{2}[v]+F_2(u,v)(t)%
\end{array}
\right)
:=
\left(
\begin{array}{c}
T_1(u,v)(t) \\
T_2(u,v)(t)%
\end{array}
\right) 
\end{aligned}
\end{gather}
leaves the cone $K$ invariant and it is completely continuous.
\begin{thm}
The operator $T$ maps $K$ in $K$ and is completely continuous.
\end{thm}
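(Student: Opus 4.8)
The plan is to split the statement into the two independent assertions — the invariance $T(K)\subseteq K$ and the complete continuity — and, since $K=K_1\times K_2$ with $T=(T_1,T_2)$, to argue coordinatewise for $i=1,2$. I would take as given the well-definedness already recorded in the construction of $T$ (the growth hypothesis $h_i(r)\le r^{-(n+\mu_i)}$ transports, under the change of variables, to $g_i\in L^1(0,1)$, which together with $k_i(t,s)\le\phi_i(s)$ and the continuity of $f_i$ makes each $F_i(u,v)(t)$ finite and, after differentiating under the integral sign, places it in $C^1_\omega[0,1]$). With that in hand, the whole problem reduces to checking the five defining requirements of $K_i$ for $T_i(u,v)$ when $(u,v)\in K$, and then a compactness/continuity argument.

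For invariance, nonnegativity of $T_i(u,v)$ is immediate from $\gamma_i,\delta_i\ge0$, $k_i\ge0$, $g_i\ge0$, $f_i\ge0$ and the cone memberships $\alpha_i[u],\beta_i[u]\ge0$. For the Harnack-type inequality $\min_{[a_i,b_i]}T_i(u,v)\ge a_i\|T_i(u,v)\|_\infty$ I would argue termwise: for $t\in[a_i,b_i]$ and any $\tau\in[0,1]$ the Proposition gives $\gamma_i(t)\ge a_i\|\gamma_i\|_\infty\ge a_i\gamma_i(\tau)$, $\delta_i(t)\ge a_i\|\delta_i\|_\infty\ge a_i\delta_i(\tau)$ and $k_i(t,s)\ge a_i\phi_i(s)\ge a_i k_i(\tau,s)$, whence $T_i(u,v)(t)\ge a_i\,T_i(u,v)(\tau)$, and taking the supremum over $\tau$ closes it. The two sign conditions follow from linearity: since $\alpha_i$ is a bounded linear functional it commutes with the integral, so $\alpha_i[F_i(u,v)]=\int_0^1\alpha_i[k_i]\,g_i(s)f_i(\cdots)\,ds\ge0$ by the hypothesis $\alpha_i[k_i]\ge0$, and the remaining two terms are nonnegative using $\alpha_i[\gamma_i]\ge0$, $\beta_i[\delta_i]\ge0$ and the nonnegativity of $\gamma_i,\delta_i$; the argument for $\beta_i[T_i(u,v)]\ge0$ is symmetric.

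The delicate requirement — and the step I expect to be the main obstacle — is the weighted gradient bound $\|(T_i(u,v))'\|_\omega\le 4\,T_i(u,v)(1/2)$. Since $\gamma_i'$ and $\delta_i'$ are constants, the nonlocal contributions reduce to $\omega(t)|\gamma_i'(t)|\le 4\gamma_i(1/2)$ and $\omega(t)|\delta_i'(t)|\le4\delta_i(1/2)$, both of which are elementary consequences of $\omega(t)\le1/4$ together with $\gamma_i(1/2)\ge\tfrac12$ and $\delta_i(1/2)=\tfrac{1}{2(d_i+c_i)}$. The substantive part is the kernel estimate $\omega(t)\,|\partial_t k_i(t,s)|\le 4\,k_i(1/2,s)$ for all $(t,s)\in(0,1)\times[0,1]$, which I would prove by splitting into the regions $s<t$ and $s>t$ (where $\partial_t k_i$ equals $-c_i s/(d_i+c_i)$ and $1-c_i s/(d_i+c_i)$) and the subcases $s<1/2$, $s>1/2$, comparing in each region with the explicit lower bound $k_i(1/2,s)\ge\tfrac12\phi_i(s)$. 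Integrating this against $g_i(s)f_i(\cdots)\ge0$ gives $\omega(t)|\partial_t F_i(u,v)(t)|\le 4F_i(u,v)(1/2)$, and adding the three nonnegative contributions yields the bound. The obstruction is that the coefficient $c_i/(d_i+c_i)$ may approach $1$ when $d_i$ is small, so a crude comparison against $\phi_i(s)$ degenerates near $s=1$; one must estimate directly against $k_i(1/2,s)$ in the subregion $1/2<s<t$, where a genuine cancellation keeps the ratio bounded (in fact by $2<4$, which is exactly why the constant $4$ in the cone is sufficient).

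For complete continuity I would work in each coordinate and invoke an Arzel\`a--Ascoli--type compactness criterion for $C^1_\omega[0,1]$ from the framework of \cite{aga-ore-yan}: a set is relatively compact as soon as the functions and their $\omega$-weighted derivatives are uniformly bounded and equicontinuous on $[0,1]$. Continuity of $T$ follows from the continuity of the bounded functionals $\alpha_i,\beta_i$ and from the dominated convergence theorem applied to $F_i$ and to $\partial_t F_i$, with the integrands dominated by $\phi_i(s)g_i(s)$ and $|\partial_t k_i(t,s)|g_i(s)$ times a uniform bound for $f_i$ coming from the decay hypotheses. For compactness, on a norm-bounded subset of $K$ the uniform boundedness of $\{T_i(u,v)\}$ and of $\{\omega(t)(T_i(u,v))'(t)\}$ is exactly what the previous steps deliver, while equicontinuity of $t\mapsto T_i(u,v)(t)$ follows from the uniform continuity of $k_i$, and equicontinuity of $t\mapsto\omega(t)(T_i(u,v))'(t)$ follows from the absolute continuity in $t$ of $\int_0^1\partial_t k_i(t,s)g_i(s)f_i(\cdots)\,ds$ (the jump of $\partial_t k_i$ across $s=t$ being integrated out), uniformly over the family by domination with an integrable majorant. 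Relative compactness of $T$ applied to bounded subsets of $K$ then follows, completing the proof.
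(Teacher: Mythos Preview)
Your proposal is correct, but the two technical cores differ from the paper's proof. For the weighted gradient bound you go directly for the pointwise kernel estimate $\omega(t)\,|\partial_t k_i(t,s)|\le 4\,k_i(1/2,s)$ and then integrate; the paper instead establishes the stronger intermediate inequality $\omega(t)\,|\partial_t k_i(t,s)|\le k_i(t,s)$ (phrased as $\omega(t)|(F_i)'(t)|\le F_i(t)\le\|F_i\|_\infty$) and then, separately, proves $\|F_i\|_\infty\le 4F_i(1/2)$ via the ratio bound $k_i(t,s)/k_i(\tau_i,s)\ge t(1-t)$, where $\tau_i$ is a maximizer of $F_i$. Your case analysis in the subregion $1/2<s<t$ is exactly the extra work needed because you bypass that two-step factorization. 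The payoff of the paper's route shows up in the compactness argument: having $\omega(t)|\partial_t k_i(t,s)|\le k_i(t,s)$ in hand, the paper avoids checking equicontinuity of the weighted derivatives altogether and simply runs Ascoli--Arzel\`a in $C[0,1]$, extracts a $\|\cdot\|_\infty$-convergent subsequence, and then uses the kernel inequality (applied to differences) to conclude that the same subsequence is Cauchy in $\|\cdot\|_\omega$. Your proposal to verify equicontinuity of $t\mapsto\omega(t)(T_i(u,v))'(t)$ directly via an integrable majorant is perfectly viable and arguably more transparent, but it is a genuinely different and slightly heavier route than the bootstrap the paper uses.
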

\begin{proof}
In order to prove that $T$  leaves the cone $K$ invariant, it is enough to prove that $T_{i}K\subset K_{i}$. 

Take
$(u_1,u_2)\in K$ such that $\|(u_1,u_2)\| \leq r$, $r>0$. We have, for $t\in [0,1]$,
\begin{align*}
\|T_i(u_1,u_2)\|_\infty& \leq\|\gamma_i\|_\infty\alpha_i[u_i]+\|\delta_i\|_\infty\beta_i[u_i]+\\
&+\int_{0}^{1}\phi_i(s)g_i(s)f_i\left(u_1(s),u_2(s),\frac{|u_1'(s)|}{|r'(s)|},\frac{|u'_2(s)|}{|r'(s)|}\right)ds<+\infty.
\end{align*}
On the other hand, we have
\begin{align*}\label{min}
\min_{t\in [a_i,b_i]}T_i(u_1,u_2)(t) &\geq a_i\left(\|\gamma_i\|_\infty\alpha_i[u_i]+\|\delta_i\|_\infty\beta_i[u_i]+\right.\ \\
&+\left.\int_{0}^{1}\phi_i(s)g_i(s)f_i\left(u_1(s),u_2(s),\frac{|u_1'(s)|}{|r'(s)|},\frac{|u_2'(s)|}{|r'(s)|}\right)\,ds \right)\\
&\geq a_i ||T_i(u_1,u_2)|| _{\infty}.
\end{align*}
Now  we prove that for every $(u_1,u_2)\in  K$ 
$$\|(F_i(u_1,u_2))'\|_{\omega}\le 4 F_i(u_1,u_2)(1/2).$$ We have
\begin{align*}
\omega(t)|(F_i(u_1,u_2&))'(t)|=\Big|-t(1-t)\displaystyle\int_0^t
\frac{c_{i} s}{d_i+c_{i}}g_i(s)f_i\left(u_1(s),u_2(s),\frac{|u_1'(s)|}{|r'(s)|},\frac{|u_2'(s)|}{|r'(s)|}\right)ds\\
&+t(1-t)\displaystyle\int_t^1
 \left(1-\frac{c_{i} s}{d_i+c_{i}}\right)g_i(s)f_i\left(u_1(s),u_2(s),\frac{|u_1'(s)|}{|r'(s)|},\frac{|u_2'(s)|}{|r'(s)|}\right)ds\Big|\\
&\leq\displaystyle\int_0^t
t(1-t) \frac{c_{i} s}{d_1+c_{i}}g_i(s)f_i\left(u_1(s),u_2(s),\frac{|u_1'(s)|}{|r'(s)|},\frac{|u_2'(s)|}{|r'(s)|}\right)ds\\
&+\displaystyle\int_t^1
 t(1-t)\left(1-\frac{c_{i} s}{d_i+c_{i}}\right)g_i(s)f_i\left(u_1(s),u_2(s),\frac{|u_1'(s)|}{|r'(s)|},\frac{|u_2'(s)|}{|r'(s)|}\right)ds\\
\end{align*}
Since $\displaystyle \frac{c_{i} t}{d_i+c_{i}}\leq t$, it holds that $\displaystyle (1-t)\leq \left(1-\frac{c_{i} t}{d_i+c_{i}}\right)$
and consequently 
\begin{align*}
\omega(t)|(F_i(u_1,u_2))'(t)|&\leq\int_0^t
s\left(1-\frac{c_{i} t}{d_i+c_{i}}\right)g_i(s)f_i\left(u_1(s),u_2(s),\frac{|u_1'(s)|}{|r'(s)|},\frac{|u_2'(s)|}{|r'(s)|}\right)ds\\
&+\int_t^1 t\left (1-\frac{c_{i} s}{d_i+c_{i}}\right)g_i(s)f_i\left(u_1(s),u_2(s),\frac{|u_1'(s)|}{|r'(s)|},\frac{|u_2'(s)|}{|r'(s)|}\right)ds\\
&=\int_0^1k_i(t,s)g_i(s)f_i\left(u_1(s),u_2(s),\frac{|u_1'(s)|}{|r'(s)|},\frac{|u_2'(s)|}{|r'(s)|}\right)ds\\
&=F_i(u_1,u_2)(t)\le \|F_i(u_1,u_2)\|_{\infty}.
\end{align*}
Let $\tau_i \in [0,1]$ be such that
$$
 F_i(u_1,u_2)(\tau_i)=\|F_i(u_1,u_2)\|_\infty.
$$
For any $t\in [0,1]$ we can easily compute that
\begin{equation*} \frac{k_i(t,s)}{k_i(\tau_i,s)}=
\begin{cases}
\displaystyle t/\tau_i, &\tau_i,t\leq s, \\ \\
\displaystyle \left(1-\frac{c_{i} t}{d_i+c_{i}}\right)\left(1-\frac{c_{i} \tau_i}{d_i+c_{i}}\right)^{-1}, &\tau_i,t> s, \\ \\
\displaystyle t\left(1-\frac{c_{i} s}{d_i+c_{i}}\right)s^{-1}\left(1-\frac{c_{i} \tau_i}{d_i+c_{i}}\right)^{-1}, &t\leq s\leq\tau_i, \\ \\
\displaystyle s\left(1-\frac{c_{i} t}{d_i+c_{i}}\right)\tau_i^{-1}\left(1-\frac{c_{i} s}{d_i+c_{i}}\right)^{-1}, &	\tau_i\leq s\leq t \\ \\
\end{cases}
\end{equation*}
and that $\displaystyle\frac{k_i(t,s)}{k_i(\tau_i,s)}\geq t(1-t)$ for $t,s\in [0,1]$.
Then, for all $t\in [0,1]$, we have
\begin{eqnarray*}
F_i(u_1,u_2)(t)&=&\int_0^1 \frac{k_i(t,s)}{k_i(\tau_i,s)}k_i(\tau_i,s)g_i(s)f_i\left(u_1(s),u_2(s),\frac{|u_1'(s)|}{|r'(s)|},\frac{|u_2'(s)|}{|r'(s)|}\right)ds \ \\
&\geq& t(1-t)\int_0^1k_i(\tau_i,s)g_i(s)f_i\left(u_1(s),u_2(s),\frac{|u_1'(s)|}{|r'(s)|},\frac{|u_2'(s)|}{|r'(s)|}\right)ds \ \\
&=&t(1-t)\|F_i(u_1,u_2)\|_{\infty}
\end{eqnarray*}
For $t=\frac{1}{2}$ we obtain
\begin{equation*}
\|F_i(u_1,u_2)\|_{\infty}\leq 4F_{i}(u_1,u_2)\left(\frac{1}{2}\right)
\end{equation*}
Let us consider 
\begin{align*}
 \omega(t)|(F_i(&u_1,u_2))'(t)|=\omega(t)\Big|\int_0^t\frac{-c_{i} s}{d_i+c_{i}}g_i(s)f_i\left(u_1(s),u_2(s),\frac{|u_1'(s)|}{|r'(s)|},\frac{|u_2'(s)|}{|r'(s)|}\right)ds\\
&+\int_t^1\left(1-\frac{c_{i} s}{d_i+c_{i}}\right)g_i(s)f_i\left(u_1(s),u_2(s),\frac{|u_1'(s)|}{|r'(s)|},\frac{|u_2'(s)|}{|r'(s)|}\right)ds\Big|\\
 &\leq\,t(1-t)\int_0^t\frac{c_{i} }{d_i+c_{i}}sg_i(s)f_i\left(u_1(s),u_2(s),\frac{|u_1'(s)|}{|r'(s)|},\frac{|u_2'(s)|}{|r'(s)|}\right)ds\\
&+t(1-t)\int_t^1\left(1-\frac{c_{i}s }{d_i+c_{i}}\right)s\,g_i(s)f_i\left(u_1(s),u_2(s),\frac{|u_1'(s)|}{|r'(s)|},\frac{|u_2'(s)|}{|r'(s)|}\right)ds\\
&\leq\left(1-\frac{c_{i} t}{d_i+c_{i}}\right)\int_0^ts\,g_i(s)f_i\left(u_1(s),u_2(s),\frac{|u_1'(s)|}{|r'(s)|},\frac{|u_2'(s)|}{|r'(s)|}\right)ds\\
&+t\int_t^1\left(1-\frac{c_{i} s}{d_i+c_{i}}\right)g_i(s)f_i\left(u_1(s),u_2(s),\frac{|u_1'(s)|}{|r'(s)|},\frac{|u_2'(s)|}{|r'(s)|}\right)ds\\
&\leq\int_0^tk_i(t,s)g_i(s)f_i\left(u_1(s),u_2(s),\frac{|u_1'(s)|}{|r'(s)|},\frac{|u_2'(s)|}{|r'(s)|}\right)ds\\
&+\int_t^1k_i(t,s)g_i(s)f_i\left(u_1(s),u_2(s),\frac{|u_1'(s)|}{|r'(s)|},\frac{|u_2'(s)|}{|r'(s)|}\right)ds\\
&=F_i(u_1,u_2)(t)\leq \|F_i(u_1,u_2)\|_\infty
\end{align*}
Therefore we conclude that
$$
\|(F_i(u_1,u_2))'\|_{\omega}\le 4 F_i(u_1,u_2)\left(\frac{1}{2}\right).
$$
Since $\gamma_i, \delta_i \in K_i$, we have
\begin{eqnarray*}
|\omega(t)(T_i(u_1,u_2))'(t)|&\leq& \omega(t)|\gamma'_{i}(t)|\alpha_{i}[u_i]+\omega(t)|\delta'_{i}(t)|\beta_{i}[u_i]\\
&&+\omega(t)|(F_i(u_i,u_2))'(t)|\\
&\leq&\|\gamma'_i\|_{\omega}\alpha_{i}[u_i]+\|\delta'_i\|_{\omega}\beta_{i}[u_i]+\|F'_i(u_1,u_2)\|_{\omega}\\
&\leq&4\gamma_i\left(\frac{1}{2}\right)\alpha_i[u_i]+4\delta_i\left(\frac{1}{2}\right)\beta_i[u_i]+4F_i(u_i,u_2)\left(\frac{1}{2}\right)\\
&=&4T_i(u_1,u_2)\left(\frac{1}{2}\right)
\end{eqnarray*}
Taking the supremum on $[0,1]$, we obtain
$$
\|(T_i(u_1,u_2))'\|_{\omega}\le 4 T_i(u_1,u_2)\left(\frac{1}{2}\right).
$$
 Since $\alpha_i$ and $\beta_i$ are linear functionals it follows that $\alpha_i[T_i(u_1,u_2)]$ and $\beta_i[T_i(u_1,u_2)]$ are nonnegative.\\ Summarizing we have $TK\subset K$.

In order to prove the completely continuity of $T$, let us note that the continuity of $f$, $k_i$, $\alpha_{i}$ and $\beta_i$  give the continuity of each $T_i$ and then the continuity of $T$.

Let $U$ be a bounded subset of $K$; we prove that $T(U)$ is relatively compact in $K$. It is a standard argument based the uniform continuity of the kernels $k_i$ on $[0,1]\times [0,1]$ and on Ascoli-Arzel\'a Theorem that $T_i(U)$ is relatively compact in $C[0,1]$.\\
Now let $(u_n,v_n)_{n\in\mathbb{N}}$ a sequence in $U$; then $T_i(u_n,v_n)\subset K_i$. 

There exists $(u_{n_k},v_{n_k})_{k\in\mathbb{N}}$ such that $(T_1(u_{n_k},v_{n_k}))_{k\in \mathbb{N}}$ converges in $C[0,1]$.

Since $T_2(U)$ is relative compact, there exists  $(u_{n_{k_p}},v_{n_{k_p}})_{p\in\mathbb{N}}:=(u_{n_p},v_{n_p})_{p\in\mathbb{N}}\subset  (u_{n_k},v_{n_k})_{k\in\mathbb{N}}$ such that  $(T_i(u_{n_p},v_{n_p}))_{p\in \mathbb{N}}\to w_i\in C[0,1]$ for $i=1,2$.  Since 
$$
 \|(T_i(u_{n_p},v_{n_p}))'-(T_i(u_{n_m},v_{n_m}))'\|_{\omega}\leq \|T_i(u_{n_p},v_{n_p})-T_i(u_{n_m},v_{n_m})\|_\infty
$$
i.e. $((T(u_{n_p},v_{n_p}))')_{p\in \mathbb{N}}$ is a Cauchy sequence in $\|\cdot\|_{\omega}$ it converges to $(w_1,w_2)$.  Closedness of $K$ implies that $(w_1,w_2)\in K$ therefore $T(U)$ is relatively compact in $C_{\omega}^1(0,1)$.

\end{proof}

In order to use the fixed point index,  we utilize the following sets in $K$, for
$\rho_1,\rho_2>0$,
\begin{equation*}
K_{\rho _{1},\rho _{2}}:=\{(u,v)\in K:|| u||_\infty <\rho _{1}\ \text{ and }\ || v||_\infty <\rho _{2}\},
\end{equation*}
\begin{equation*}
 V_{\rho _{1},\rho _{2}}:=\{(u,v)\in K:
\displaystyle{\min_{t\in [a_1,b_1]}}u(t)<\rho_1 \text{ and
}\displaystyle{\min_{t\in [a_2,b_2]}}v(t)<\rho_2\}.
\end{equation*}
Since $\|w'\|_\omega\leq 4\|w\|_\infty$ in $K$, then $\|w\|\leq 4\|w\|_\infty$ therefore $K_{\rho_1,\rho_2}$ and $V_{\rho_1,\rho_2}$ are open and bounded relative to $K$.
It is straightforward  to verify that the above sets satisfy the properties:
\begin{itemize}
\item[$(P_1)$]$K_{\rho_1,\rho_2}\subset V_{\rho_1,\rho_2}\subset K_{\rho_1/a_1,\rho_2/a_2}$.
\item[$(P_2)$] $(w_1,w_2)\in\partial K_{\rho _{1},\rho _{2}}$ if and only if
$(w_{1},w_{2})\in K$ and for some $i\in \{1,2\}$  $\|w_i\|_{\infty}=\rho_i$ and $a_i \rho_i \le
w_i(t)\le \rho_i$ for $t\in[a_i,b_i]$.
\item[$(P_3)$] $(w_1,w_2) \in \partial V_{\rho_1,\rho_2}$ if and only if  $(w_1,w_2)\in K$ and for some $i\in \{1,2\}$ $\displaystyle
\min_{t\in [a_i,b_i]} w_i(t)= \rho_i$  and
 $\rho_i \le w_i(t) \le \rho_i/a_i$  for  $t\in[a_i,b_i]$.
\end{itemize}
\,\,\,\,
The following theorem follows from classical
results about fixed point index (more details can be seen, for example, in
\cite{Amann-rev, guolak}).
\begin{thm} \label{index}Let $K$ be a cone in an ordered Banach space $X$. Let $\Omega $ be
an open bounded subset with $0 \in \Omega\cap K$ and
$\overline{\Omega \cap K}\neq K$.  Let $\Omega ^{1}$ be open in
$X$ with $\overline{\Omega ^{1}}\subset \Omega \cap K$. Let
$F:\overline{\Omega \cap K}\rightarrow K$ be a compact map.
 Suppose that
\begin{itemize}
\item[(1)]$Fx\neq \mu x$ for all $x\in\partial( \Omega \cap K)$
and for all $\mu \geq 1$.
\item[(2)] There exists $h\in K\setminus \{0\}$ such that $x\neq Fx+\lambda h$ for all $x\in \partial (\Omega^1 \cap K)$ and all $\lambda
>0$.
\end{itemize}
Then $F$ has at least one fixed point $x \in (\Omega \cap
K)\setminus\overline{(\Omega^{1}\cap K)}$.\\
Denoting by $i_K(F,U)$ the fixed point index of $F$ in some
$U\subset X$, we have $$i_{K}(F,\Omega \cap K)=1 \mbox{ and }
i_{K}(F,\Omega^{1} \cap K)=0\,.$$
 The same result holds if
$$i_{K}(F,\Omega \cap K)=0 \mbox{ and }
i_{K}(F,\Omega^{1} \cap K)=1.$$

\end{thm}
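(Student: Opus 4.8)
The plan is to deduce the statement from the three standard properties of the fixed point index for compact maps on a cone—normalization, additivity (excision), and homotopy invariance—together with the solution (existence) property, exactly as developed in \cite{Amann-rev, guolak}. Throughout, the hypothesis $\overline{\Omega\cap K}\neq K$ is what guarantees that the index $i_K(F,\cdot)$ is well defined on the relatively open sets in play. The argument decomposes into three moves: compute the index on $\Omega\cap K$ from hypothesis (1), compute it on $\Omega^1\cap K$ from hypothesis (2), and then read off both existence and the displayed index values by additivity.

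First I would prove $i_K(F,\Omega\cap K)=1$. The natural device is the compact homotopy $H(t,x)=tFx$ for $t\in[0,1]$, and the point is to check it is fixed-point free on $\partial(\Omega\cap K)$. If $x=tFx$ with $t\in(0,1]$ then $Fx=(1/t)x=\mu x$ with $\mu=1/t\geq 1$, contradicting (1); and $t=0$ forces $x=0$, which lies in the interior of $\Omega\cap K$ since $0\in\Omega$ (open) and $0\in K$, so $0\notin\partial(\Omega\cap K)$. Homotopy invariance then gives $i_K(F,\Omega\cap K)=i_K(0,\Omega\cap K)$, and normalization (the index of the constant map $0$, whose value lies in $\Omega\cap K$) equals $1$.

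Next I would prove $i_K(F,\Omega^1\cap K)=0$ using (2) and the fixed direction $h\in K\setminus\{0\}$. Because $F$ is compact and $\overline{\Omega^1\cap K}$ is bounded, $F(\overline{\Omega^1\cap K})$ is bounded; since $h\neq 0$ I can choose $\lambda_0>0$ so large that $\|Fx+\lambda_0 h\|$ exceeds $\sup\{\|y\|:y\in\overline{\Omega^1\cap K}\}$ for every $x$, whence $Fx+\lambda_0 h\notin\overline{\Omega^1\cap K}$ and the map $x\mapsto Fx+\lambda_0 h$ has no fixed point in $\Omega^1\cap K$, giving it index $0$. I then connect this to $F$ through the compact homotopy $H(t,x)=Fx+t\lambda_0 h$, $t\in[0,1]$: for $t\in(0,1]$ the scalar $t\lambda_0>0$, so (2) forbids fixed points on $\partial(\Omega^1\cap K)$, while the endpoint $t=0$ is covered because (2), read in the limit $\lambda\downarrow 0$ as in \cite{guolak}, also excludes $Fx=x$ on that boundary. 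Homotopy invariance then yields $i_K(F,\Omega^1\cap K)=i_K\big(F(\cdot)+\lambda_0 h,\,\Omega^1\cap K\big)=0$.

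Finally, since $\overline{\Omega^1}\subset\Omega\cap K$ and $F$ has no fixed point on $\partial(\Omega^1\cap K)$, additivity gives $i_K(F,\Omega\cap K)=i_K(F,\Omega^1\cap K)+i_K\big(F,(\Omega\cap K)\setminus\overline{\Omega^1\cap K}\big)$, so the index on the annular region equals $1-0=1\neq 0$; by the solution property $F$ has a fixed point there, which is the asserted $x\in(\Omega\cap K)\setminus\overline{(\Omega^1\cap K)}$. The two displayed index values are precisely what the first two moves produced, and the swapped variant follows verbatim by interchanging the roles of the two hypotheses. I expect the only delicate point to be the index-zero computation: one must exploit compactness to produce $\lambda_0$ pushing the perturbed map off the bounded set, and must ensure the homotopy stays fixed-point free all the way down to $t=0$, i.e. that (2) also rules out genuine fixed points of $F$ on $\partial(\Omega^1\cap K)$.
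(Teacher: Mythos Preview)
The paper does not supply its own proof of this theorem; it simply records that the result follows from classical fixed point index theory and refers the reader to \cite{Amann-rev, guolak}. Your proposal reproduces precisely the standard argument from those references---homotopy to the zero map for the index-$1$ computation, translation along $h$ for the index-$0$ computation, then additivity and the solution property---and is correct, including your flagging of the $\lambda\downarrow 0$ endpoint as the only delicate step.
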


\section{The system of elliptic PDE}

We define the following sets:
\begin{equation*}
\Omega^{\rho_1,\rho_2}=\left [0,
\rho_1\right]\times\left [0, \rho_2\right]\times
\left[0, +\infty\right)\times\left [ 0,
+\infty\right),
\end{equation*}
\begin{align*}
&A_1^{s_1,s_2}=
\left[s_1,\frac{s_1}{a_1}\right]\times\left[0,\frac{s_2}{a_2}\right]\times\left[0, +\infty\right)\times\left [ 0,
+\infty\right),\\
&A_2^{s_1,s_2}=\left[0,\frac{s_1}{a_1}\right]\times\left[s_2,\frac{s_2}{a_2}\right]\times\left[0, +\infty\right)\times\left [ 0,
+\infty\right),
\end{align*}
and the numbers
\begin{align*}
&\displaystyle C_i:=\left[\frac{1}{D_i}\Bigg(\bigl[(1-\beta_i[\delta_i])+\|\delta_i\|_{\infty}\beta_i[\gamma_i]\bigr]\int_0^1\alpha_i[k_i]g_i(s)ds\right. \\
&\left. \nonumber\displaystyle+\bigl[\alpha_i[\delta_i]+\|\delta_i\|_{\infty} (1-\alpha_i[\gamma_i])\bigr]\int_0^1\beta_i[k_i]g_i(s)ds\Bigg)
+\sup_{t\in [0,1]}\int_0^1k_i(t,s)g_i(s)ds\right]^{-1}\\
&M_i=\left[\frac{1}{D_i}\left(\Bigl[a_i(1-\beta_i[\delta_i])+a_i\|\delta_i\|_{\infty}\beta_i[\gamma_i]\Bigr]\int_{a_i}^{b_i}\alpha_i[k_i]g_i(s)ds\right.\right.\\
&\left.\left.+\Bigl[a_i\alpha_i[\delta_i]+a_i\|\delta_i\|_{\infty} (1-\alpha_i[\gamma_i])\Bigr]\int_{a_i}^{b_i}\beta_1[k_i]g_i(s)ds\right)+\inf_{t \in [a_i,b_i]}\int_{a_i}^{b_i}k_i(t,s)g_i(s)ds)\right]^{-1}.
\end{align*}
\begin{thm}\label{ellyptic}
 Suppose that there exist
$\rho_1, \rho_2, s_1,s_2\in (0,+\infty )$, with $\rho _{i}<\,s
_{i}\,,\,\,i=1,2$, such that the following conditions hold
\begin{equation}\label{pde}
\sup_{\Omega^{\rho_1,\rho_2}}
f_i(w_1,w_2,z_1,z_2)<C_i\rho_i
\end{equation}
and
\begin{equation}\label{Mi}
\inf_{A_i^{s_1,s_2}}
f_i(w_1,w_2,z_1,z_2)>M_is_i
\end{equation}
Then the system (\ref{do2}) has at least one positive radial
solution. 
\end{thm}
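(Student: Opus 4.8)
The plan is to apply the fixed point index result of Theorem~\ref{index} to the completely continuous operator $T$ that leaves $K$ invariant, on the two nested families of relatively open sets introduced above, showing that the index equals $1$ on $K_{\rho_1,\rho_2}$ and $0$ on $V_{s_1,s_2}$. Since $\rho_i<s_i$, property $(P_1)$ gives $\overline{K_{\rho_1,\rho_2}}\subset V_{s_1,s_2}$ (any $(u,v)$ with $\|u\|_\infty\le\rho_1$, $\|v\|_\infty\le\rho_2$ satisfies $\min_{[a_i,b_i]}u_i\le\rho_i<s_i$), so $K_{\rho_1,\rho_2}$ plays the role of $\Omega^1\cap K$ and $V_{s_1,s_2}$ that of $\Omega\cap K$; the variant of Theorem~\ref{index} with the two indices interchanged then produces a fixed point in $V_{s_1,s_2}\setminus\overline{K_{\rho_1,\rho_2}}$. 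Throughout I use that $\gamma_i,\delta_i\in K_i$ forces $\alpha_i[\delta_i]\ge0$ and $\beta_i[\gamma_i]\ge0$, so that the $2\times2$ matrix with diagonal entries $1-\alpha_i[\gamma_i]$, $1-\beta_i[\delta_i]$ and off-diagonal entries $-\alpha_i[\delta_i]$, $-\beta_i[\gamma_i]$ is an $M$-matrix of determinant $D_i>0$ with entrywise nonnegative inverse.

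For the index on $K_{\rho_1,\rho_2}$ I verify condition~(1): $T(u,v)\ne\mu(u,v)$ for all $(u,v)\in\partial K_{\rho_1,\rho_2}$ and $\mu\ge1$. Assuming the contrary and, via $(P_2)$, fixing $i$ with $\|u_i\|_\infty=\rho_i$, membership in $K_{\rho_1,\rho_2}$ forces $(u_1(s),u_2(s))\in[0,\rho_1]\times[0,\rho_2]$ while the gradient slots are unconstrained, so the arguments of $f_i$ lie in $\Omega^{\rho_1,\rho_2}$ and $f_i(\cdots)\le\sup_{\Omega^{\rho_1,\rho_2}}f_i<C_i\rho_i$ on $[0,1]$. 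Applying $\alpha_i,\beta_i$ to $\mu u_i=T_i(u,v)$ and using $\mu\ge1$ with the nonnegativity of every term yields $(1-\alpha_i[\gamma_i])\alpha_i[u_i]-\alpha_i[\delta_i]\beta_i[u_i]\le\alpha_i[F_i(u,v)]$ and $-\beta_i[\gamma_i]\alpha_i[u_i]+(1-\beta_i[\delta_i])\beta_i[u_i]\le\beta_i[F_i(u,v)]$; since the coefficient matrix has nonnegative inverse, $\alpha_i[u_i]$ and $\beta_i[u_i]$ are bounded above by the $\mu=1$ Cramer expressions. Combining $\|\gamma_i\|_\infty=1$, the value of $\|\delta_i\|_\infty$, and $\alpha_i[F_i(u,v)]=\int_0^1\alpha_i[k_i]\,g_i(s)f_i\,ds\le(\sup f_i)\int_0^1\alpha_i[k_i]\,g_i(s)\,ds$ (and likewise for $\beta_i[F_i]$ and $\|F_i\|_\infty$) reproduces exactly the constant $C_i$, so $\|T_i(u,v)\|_\infty\le C_i^{-1}\sup_{\Omega^{\rho_1,\rho_2}}f_i<\rho_i$. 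This contradicts $\|T_i(u,v)\|_\infty=\mu\|u_i\|_\infty=\mu\rho_i\ge\rho_i$, giving $i_K(T,K_{\rho_1,\rho_2})=1$.

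For the index on $V_{s_1,s_2}$ I verify condition~(2) with a fixed $h=(h_1,h_2)\in K\setminus\{0\}$: suppose $(u,v)=T(u,v)+\lambda h$ for some $(u,v)\in\partial V_{s_1,s_2}$ and $\lambda>0$. By $(P_3)$ choose $i$ with $\min_{[a_i,b_i]}u_i=s_i$; by $(P_1)$ one has $u_i(s)\in[s_i,s_i/a_i]$ and $u_j(s)\in[0,s_j/a_j]$ for $s\in[a_i,b_i]$, so the arguments of $f_i$ lie in $A_i^{s_1,s_2}$ and $f_i(\cdots)\ge\inf_{A_i^{s_1,s_2}}f_i>M_is_i$ there. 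Applying $\alpha_i,\beta_i$ to the equation and using $\lambda>0$ and $h_i\in K_i$ (so $\alpha_i[h_i],\beta_i[h_i]\ge0$) reverses the inequalities; the nonnegative inverse of the coefficient matrix now bounds $\alpha_i[u_i],\beta_i[u_i]$ from \emph{below} by the same $\mu=1$ Cramer expressions. Restricting every integration to $[a_i,b_i]$, using the Harnack-type lower bounds $\gamma_i(t)\ge a_i$ and $\delta_i(t)\ge a_i\|\delta_i\|_\infty$ for $t\in[a_i,b_i]$ together with $\inf_{t\in[a_i,b_i]}\int_{a_i}^{b_i}k_i(t,s)g_i(s)\,ds$ for the Hammerstein term, and assembling the pieces reproduces the constant $M_i$, whence $\min_{t\in[a_i,b_i]}T_i(u,v)(t)\ge M_i^{-1}\inf_{A_i^{s_1,s_2}}f_i>s_i$. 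Then $\min_{[a_i,b_i]}u_i\ge\min_{[a_i,b_i]}T_i(u,v)+\lambda\min_{[a_i,b_i]}h_i>s_i$, contradicting $\min_{[a_i,b_i]}u_i=s_i$; hence $i_K(T,V_{s_1,s_2})=0$.

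Since the two indices differ, Theorem~\ref{index} yields a fixed point $(u,v)\in V_{s_1,s_2}\setminus\overline{K_{\rho_1,\rho_2}}$, which is nonnegative and nontrivial (at least one component has sup-norm $\ge\rho_i>0$ and is bounded below by $a_i\|\cdot\|_\infty$ on $[a_i,b_i]$). This $(u,v)$ solves the perturbed Hammerstein system, hence the ODE system~\eqref{1syst}, and undoing the change of variable $r(t)=r_0t^{1/(2-n)}$ returns a positive radial solution of~\eqref{do2}. I expect the main obstacle to be the bookkeeping for the nonlocal functionals: one must convert the perturbation parameter ($\mu\ge1$ in the first case, $\lambda>0$ in the second) into the clean constants $C_i$ and $M_i$, which hinges on recognizing the coefficient matrix as an $M$-matrix with nonnegative inverse, so that the $\mu=1$ Cramer bounds survive as one-sided estimates. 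By contrast the gradient dependence of the $f_i$ causes no difficulty, since the corresponding slots of $\Omega^{\rho_1,\rho_2}$ and $A_i^{s_1,s_2}$ are all of $[0,+\infty)$ and the hypotheses \eqref{pde}--\eqref{Mi} already control $f_i$ uniformly in those variables.
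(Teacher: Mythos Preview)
Your proposal is correct and follows essentially the same route as the paper's proof: both compute $i_K(T,K_{\rho_1,\rho_2})=1$ via $T(u,v)\ne\mu(u,v)$ on $\partial K_{\rho_1,\rho_2}$ and $i_K(T,V_{s_1,s_2})=0$ via $(u,v)\ne T(u,v)+\lambda h$ on $\partial V_{s_1,s_2}$, in each case applying $\alpha_i,\beta_i$ to the fixed-point equation, inverting the resulting $2\times2$ $M$-matrix to bound $\alpha_i[u_i],\beta_i[u_i]$, and assembling the pieces to recover $C_i$ or $M_i$. The only cosmetic differences are that the paper fixes the specific push-off $h=(1,1)$ and compares $(N_i^\lambda)^{-1}\le(N_i)^{-1}$ entrywise, whereas you first reduce to the $\mu=1$ system and then invert.
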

\begin{proof}
We note that the choice of the numbers $\rho_i$ and $s_i$
assures the compatibility of conditions \eqref{pde} and \eqref{Mi}.\\
We want to show that $i_K(T, K_{\rho_1,\rho_2})=1$ and $i_K(T,
V_{s_1,s_2})=0$. In such a way that from Theorem \ref{index} it follows that the completely continuous operator $T$ has a
fixed point in $V_{s_1,s_2}\setminus
\overline{K}_{\rho_1,\rho_2}$. Then the system \eqref{do2} admits a positive radial solution. \\
Firstly we claim that $\lambda (u,v)\neq T(u,v)$ for every $(u,v)\in
\partial K_{\rho_1,\rho_2}$ and for every $\lambda \geq 1$,
which implies that the index of $T$ is 1 on $K_{\rho_1,\rho_2}$.
Assume this is not true. 
 Let $\lambda\geq1$ and let $(u,v)\in\partial K_{\rho_1,\rho_2}$ such that
$$
 \lambda(u,v)=T(u,v).
$$
In view of $(P_2)$, without loss in generality, let us suppose that $\|u\|_{\infty}=\rho_1$.\\
Then
\begin{eqnarray}\label{star}
 \lambda u(t)=\gamma_1(t)\alpha_1[u]+\delta_1(t)\beta_1[u]+\int_0^1k_1(t,s)g_1(s)f_1\left(u(s),v(s),\frac{|u'(s)|}{|r'(s)|},\frac{|v'(s)|}{|r'(s)|}\right)ds
\end{eqnarray}
Applying $\alpha_1$ to both terms we have:
$$
\lambda\alpha_1[u]=\alpha_1[\gamma_1]\alpha_1[u]+\alpha_1[\delta_1]\beta_1[u]+\int_0^1\alpha_1[k_1]g_1(s)f_1\left(u(s),v(s),\frac{|u'(s)|}{|r'(s)|},\frac{|v'(s)|}{|r'(s)|}\right)ds
$$
that implies
$$
(\lambda-\alpha_1[\gamma_1])\alpha_1[u]-\alpha_1[\delta_1]\beta_1[u]=\int_0^1\alpha_1[k_1]g_1(s)f_1\left(u(s),v(s),\frac{|u'(s)|}{|r'(s)|},\frac{|v'(s)|}{|r'(s)|}\right)ds
$$
In a similar way, applying $\beta_1$ we obtain
$$
(\lambda-\beta_1[\delta_1])\beta_1[u]-\beta_1[\gamma_1]\alpha_1[u]=\int_0^1\beta_1[k_1]g_1(s)f_1\left(u(s),v(s),\frac{|u'(s)|}{|r'(s)|},\frac{|v'(s)|}{|r'(s)|}\right)ds\\
$$
Denoting by
$$
N_1^\lambda:=\left(
\begin{array}{ccccc}
\lambda-\alpha_1[\gamma_1] & -\alpha_1[\delta_1] \\
 -\beta_1[\gamma_1] & \lambda-\beta_1[\delta_1]
\end{array}
\right),\,\,\,\,\,\,N^1_1:=N_1\,\,\,\textit{ and }\,\,\,D_1:=\det N_1,
$$

previous conditions can written as
$$
N_1^\lambda\left(
\begin{array}{ccccc}
\alpha_1[u] \\
\beta_1[u]
\end{array}
\right)=
\left(
\begin{array}{ccccc}\displaystyle
\int_0^1\alpha_1[k_1]g_1(s)f_1\left(u(s),v(s),\frac{|u'(s)|}{|r'(s)|},\frac{|v'(s)|}{|r'(s)|}\right)ds\\
\displaystyle\int_0^1\beta_1[k_1]g_1(s)f_1\left(u(s),v(s),\frac{|u'(s)|}{|r'(s)|},\frac{|v'(s)|}{|r'(s)|}\right)ds
\end{array}
\right)
$$
therefore, we get that
\begin{eqnarray*}
\left(
\begin{array}{ccccc}
\alpha_1[u] \\
\beta_1[u]
\end{array}
\right)=(N_1^\lambda)^{-1}\left(
\begin{array}{ccccc}
\displaystyle\int_0^1\alpha_1[k_1]g_1(s)f_1\left(u(s),v(s),\frac{|u'(s)|}{|r'(s)|},\frac{|v'(s)|}{|r'(s)|}\right)ds\\
\displaystyle\int_0^1\beta_1[k_1]g_1(s)f_1\left(u(s),v(s),\frac{|u'(s)|}{|r'(s)|},\frac{|v'(s)|}{|r'(s)|}\right)ds
\end{array}
\right)\\
\leq (N_1)^{-1}\left(
\begin{array}{ccccc}
\displaystyle\int_0^1\alpha_1[k_1]g_1(s)f_1\left(u(s),v(s),\frac{|u'(s)|}{|r'(s)|},\frac{|v'(s)|}{|r'(s)|}\right)ds\\
\displaystyle\int_0^1\beta_1[k_1]g_1(s)f_1\left(u(s),v(s),\frac{|u'(s)|}{|r'(s)|},\frac{|v'(s)|}{|r'(s)|}\right)ds
\end{array}
\right)\\
\end{eqnarray*}
so that, formula (\ref{star}) becomes for $t \in [0,1]$
\begin{eqnarray*}
u(t)&\leq &\frac{1}{D_1}\left[\gamma_1(t)(1-\beta_1[\delta_1])\int_0^1\alpha_1[k_1]g_1(s)f_1\left(u(s),v(s),\frac{|u'(s)|}{|r'(s)|},\frac{|v'(s)|}{|r'(s)|}\right)ds \right.\\
&&\left. +\gamma_1(t)\alpha_1[\delta_1]\int_0^1\beta_1[k_1]g_1(s)f_1\left(u(s),v(s),\frac{|u'(s)|}{|r'(s)|},\frac{|v'(s)|}{|r'(s)|}\right)ds\right.\\
&&\left. +\delta_1(t)\beta_1[\gamma_1]\int_0^1\alpha_1[k_1]g_1(s)f_1\left(u(s),v(s),\frac{|u'(s)|}{|r'(s)|},\frac{|v'(s)|}{|r'(s)|}\right)ds \right.\\
&&\left.  +\delta_1(t) (1-\alpha_1[\gamma_1])\int_0^1\beta_1[k_1]g_1(s)f_1\left(u(s),v(s),\frac{|u'(s)|}{|r'(s)|},\frac{|v'(s)|}{|r'(s)|}\right)ds\right]\\
&&+\int_0^1k_1(t,s)g_1(s)f_1\left(u(s),v(s),\frac{|u'(s)|}{|r'(s)|},\frac{|v'(s)|}{|r'(s)|}\right)ds\\
&=&\frac{1}{D_1|}\left[\Bigl[\gamma_1(t)(1-\beta_1[\delta_1])+\delta_1(t)\beta_1[\gamma_1]\Bigr]\int_0^1\alpha_1[k_1]g_1(s)f_1\left(u(s),v(s),\frac{|u'(s)|}{|r'(s)|},\frac{|v'(s)|}{|r'(s)|}\right)ds \right.\\
&&+\left.\Bigl[\gamma_1(t)\alpha_1[\delta_1]+\delta_1(t) (1-\alpha_1[\gamma_1])\Bigr]\int_0^1\beta_1[k_1]g_1(s)f_1\left(u(s),v(s),\frac{|u'(s)|}{|r'(s)|},\frac{|v'(s)|}{|r'(s)|}\right)ds\right]\\
&&+\int_0^1k_1(t,s)g_1(s)f_1\left(u(s),v(s),\frac{|u'(s)|}{|r'(s)|},\frac{|v'(s)|}{|r'(s)|}\right)ds\\
&\leq&\sup_{\Omega^{\rho_1,\rho_2}}
f_1(w_1,w_2,z_1,z_2)\left[\frac{1}{D_1}\left[\Bigl(\gamma_1(t)(1-\beta_1[\delta_1])+\delta_1(t)\beta_1[\gamma_1]\Bigr)\int_0^1\alpha_1[k_1]g_1(s)ds \right.\right.\\
&&+\left.\Bigl(\gamma_1(t)\alpha_1[\delta_1]+\delta_1(t) (1-\alpha_1[\gamma_1])\Bigr)\int_0^1\beta_1[k_1]g_1(s)ds\right]\\
&&\left.+\int_0^1k_1(t,s)g_1(s)ds\right].\\\
\end{eqnarray*}

Taking the supremum in $[0,1]$ in the last inequality, it follows that 
\begin{eqnarray*}
\rho_1=||u||_{\infty}&\leq&\sup_{\Omega^{\rho_1,\rho_2}}
f_1(w_1,w_2,z_1,z_2)\left[\frac{1}{D_1}\Bigg[\Bigl[(1-\beta_1[\delta_1])+\|\delta_1\|_{\infty}\beta_1[\gamma_1]\Bigr]\int_0^1\alpha_1[k_1]g_1(s)ds\right. \\
\nonumber\displaystyle&+&\Bigl[\alpha_1[\delta_1]+\|\delta_1\|_{\infty} (1-\alpha_1[\gamma_1])\Bigr]\int_0^1\beta_1[k_1]g_1(s)ds\Bigg]\\
\nonumber\displaystyle &+&\sup_{t \in [0,1]}\int_0^1k_1(t,s)g_1(s)ds\Bigg]=\frac{1}{C_1}\sup_{\Omega^{\rho_1,\rho_2}}f_1(w_1,w_2,z_1,z_2)<\rho_1,
\end{eqnarray*}
 that is a contradiction.\\
Now we show that that the index of $T$ is $0$ on $V_{s_1,s_2}$.\\
Consider $l(t)=1$ for $t\in [ 0,1],$ and note that $(l,l)\in
K$.
Now we claim that
\begin{equation*}
(u,v)\neq T(u,v)+\lambda (l,l)\quad \text{for }(u,v)\in \partial V_{s_1,s_2}\quad \text{and }\lambda \geq 0.
\end{equation*}%
Assume, by contradiction, that there exist $(u,v)\in \partial V_{s_1,s_2}$ and $\lambda \geq 0$ such that $(u,v)=T(u,v)+\lambda (l,l)$.\\
Without loss of generality, we can assume that $\displaystyle\min_{t\in[a_1,b_1]} u(t)=
s_1$ and $\,s_1\leq u(t)\leq {s_1/a_1}$  for $t\in [a_1,b_1]$.
Then, for $t\in [ a_{1},b_{1}]$, we obtain
\begin{equation}\label{u}
u(t) =\gamma_1(t)\alpha_1[u]+\delta_1(t)\beta_1[u]+\int_0^1k_1(t,s)g_1(s)f_1\left(u(s),v(s),\frac{|u'(s)|}{|r'(s)|},\frac{|v'(s)|}{|r'(s)|}\right)ds+\lambda. 
\end{equation}
Applying $\alpha_1$ and $\beta_1$ to both sides of \eqref{u} gives
\begin{align*}
\alpha_1[u]&=\alpha_1[\gamma_1]\alpha_1[u]+\alpha_1[\delta_1]\beta_1[u]+\int_0^1\alpha_1[k_1]g_1(s)f_1\left(u(s),v(s),\frac{|u'(s)|}{|r'(s)|},\frac{|v'(s)|}{|r'(s)|}\right)ds+\lambda \alpha_1[1], \\
\beta_1[u]&=\beta_1[\gamma_1]\alpha_1[u]+\beta_1[\delta_1]\beta_1[u]+\int_0^1\beta_1[k_1]g_1(s)f_1\left(u(s),v(s),\frac{|u'(s)|}{|r'(s)|},\frac{|v'(s)|}{|r'(s)|}\right)ds+\lambda \beta_1[1].
\end{align*}
Thus, we have
\begin{align*}
&(1-\alpha_1[\gamma_1])\alpha_1[u]-\alpha_1[\delta_1]\beta_1[u]=\int_0^1\alpha_1[k_1]g_1(s)f_1\left(u(s),v(s),\frac{|u'(s)|}{|r'(s)|},\frac{|v'(s)|}{|r'(s)|}\right)ds+\lambda \alpha_1[1],\\
&-\beta_1[\gamma_1]\alpha_1[u]+(1-\beta_1[\delta_1])\beta_1[u]=\int_0^1\beta_1[k_1]g_1(s)f_1\left(u(s),v(s),\frac{|u'(s)|}{|r'(s)|},\frac{|v'(s)|}{|r'(s)|}\right)ds+\lambda \beta_1[1].\\
\end{align*}
Therefore
\begin{equation*}
N_1 \left(
\begin{array}{ccccc}
\alpha_1[u] \\
 \beta_1[u]
\end{array}
\right)=\left(
\begin{array}{ccccc}
\displaystyle\int_0^1\alpha_1[k_1]g_1(s)f_1\left(u(s),v(s),\frac{|u'(s)|}{|r'(s)|},\frac{|v'(s)|}{|r'(s)|}\right)ds+\lambda \alpha_1[1]\\
\displaystyle\int_0^1\beta_1[k_1]g_1(s)f_1\left(u(s),v(s),\frac{|u'(s)|}{|r'(s)|},\frac{|v'(s)|}{|r'(s)|}\right)ds+\lambda \beta_1[1]
\end{array}
\right).
\end{equation*}
If we apply the matrix $(N_1)^{-1}$ to both sides of the last equality, then we obtain
\begin{align*}
\left(
\begin{array}{ccccc}
\alpha_1[u] \\
 \beta_1[u]
\end{array}
\right)&=(N_1)^{-1}\left(
\begin{array}{ccccc}
\displaystyle\int_0^1\alpha_1[k_1]g_1(s)f_1\left(u(s),v(s),\frac{|u'(s)|}{|r'(s)|},\frac{|v'(s)|}{|r'(s)|}\right)ds+\lambda \alpha_1[1]\\
\displaystyle\int_0^1\beta_1[k_1]g_1(s)f_1\left(u(s),v(s),\frac{|u'(s)|}{|r'(s)|},\frac{|v'(s)|}{|r'(s)|}\right)ds+\lambda \beta_1[1]
\end{array}
\right)\\
&\geq(N_1)^{-1}\left(
\begin{array}{ccccc}
\displaystyle\int_0^1\alpha_1[k_1]g_1(s)f_1\left(u(s),v(s),\frac{|u'(s)|}{|r'(s)|},\frac{|v'(s)|}{|r'(s)|}\right)ds\\
\displaystyle\int_0^1\beta_1[k_1]g_1(s)f_1\left(u(s),v(s),\frac{|u'(s)|}{|r'(s)|},\frac{|v'(s)|}{|r'(s)|}\right)ds
\end{array}
\right).
\end{align*}
Thus, as in previous step, we have
\begin{align*}
u(t)&\geq\frac{1}{D_1}\left[\Bigl[\gamma_1(t)(1-\beta_1[\delta_1])+\delta_1(t)\beta_1[\gamma_1]\Bigr]\int_0^1\alpha_1[k_1]g_1(s)f_1\left(u(s),v(s),\frac{|u'(s)|}{|r'(s)|},\frac{|v'(s)|}{|r'(s)|}\right)ds \right.\\
&+\left.\Bigl[\gamma_1(t)\alpha_1[\delta_1]+\delta_1(t) (1-\alpha_1[\gamma_1])\Bigr]\int_0^1\beta_1[k_1]g_1(s)f_1\left(u(s),v(s),\frac{|u'(s)|}{|r'(s)|},\frac{|v'(s)|}{|r'(s)|}\right)ds\right]\\
&+\int_0^1k_1(t,s)g_1(s)f_1\left(u(s),v(s),\frac{|u'(s)|}{|r'(s)|},\frac{|v'(s)|}{|r'(s)|}\right)ds+\lambda.\\
\end{align*}
Then, for $t \in [a_1,b_1]$, we obtain
\begin{align*}
u(t)&\geq\frac{1}{D_1}\left[\Bigl[\gamma_1(t)(1-\beta_1[\delta_1])+\delta_1(t)\beta_1[\gamma_1]\Bigr]\int_{a_1}^{b_1}\alpha_1[k_1]g_1(s)f_1\left(u(s),v(s),\frac{|u'(s)|}{|r'(s)|},\frac{|v'(s)|}{|r'(s)|}\right)ds \right.\\
&+\left.\Bigl[\gamma_1(t)\alpha_1[\delta_1]+\delta_1(t) (1-\alpha_1[\gamma_1])\Bigr]\int_{a_1}^{b_1}\beta_1[k_1]g_1(s)f_1\left(u(s),v(s),\frac{|u'(s)|}{|r'(s)|},\frac{|v'(s)|}{|r'(s)|}\right)ds\right]\\
&+\int_{a_1}^{b_1}k_1(t,s)g_1(s)f_1\left(u(s),v(s),\frac{|u'(s)|}{|r'(s)|},\frac{|v'(s)|}{|r'(s)|}\right)ds+\lambda\\
&\geq\inf_{A_1^{s_1,s_2}}f_1(w_1,w_2,z_1,z_2) \left[\frac{1}{D_1}\left[\Bigl[\gamma_1(t)(1-\beta_1[\delta_1])+\delta_1(t)\beta_1[\gamma_1]\Bigr]\int_{a_1}^{b_1}\alpha_1[k_1]g_1(s)ds \right.\right.\\
&+\left.\Bigl[\gamma_1(t)\alpha_1[\delta_1]+\delta_1(t) (1-\alpha_1[\gamma_1])\Bigr]\int_{a_1}^{b_1}\beta_1[k_1]g_1(s)ds\right]\\
&\left.+\int_{a_1}^{b_1}k_1(t,s)g_1(s)ds\right]+\lambda.
\end{align*}
Taking the minimum over $[a_1,b_1]$ gives
\begin{align*}
s_1&\geq\inf_{A_1^{s_1,s_2}}f_1(w_1,w_2,z_1,z_2) \left[\frac{1}{D_1}\left[\Bigl[a_1(1-\beta_1[\delta_1])+a_1\|\delta_1\|_{\infty}\beta_1[\gamma_1]\Bigr]\int_{a_1}^{b_1}\alpha_1[k_1]g_1(s)ds \right.\right.\\
&+\left.\Bigl[a_1\alpha_1[\delta_1]+a_1\|\delta_1\|_{\infty} (1-\alpha_1[\gamma_1])\Bigr]\int_{a_1}^{b_1}\beta_1[k_1]g_1(s)ds\right]\\
&\left.+\inf_{t \in [a_1,b_1]}\int_{a_1}^{b_1}k_1(t,s)g_1(s)ds\right]+\lambda >M_1s_1\frac{1}{M_1}+ \lambda,
\end{align*}
 a contradiction.
\end{proof}
By means of Theorem~\ref{ellyptic} and the fixed point index properties in Theorem~\ref{index}, we can state results on the existence of  \emph{multiple} positive solutions
for the system~\eqref{do2}. Here we enunciate a result on the existence of two positive solutions (see the papers~\cite{lan-lin-na,  lanwebb} for the conditions that assure three  o more positive results).
\begin{thm}\label{ellyptic2}
Suppose that  there exist $\rho _{i},s _{i},\theta_i\in (0,\infty )$ with $\rho _{i}/c_i<s_i <\theta_{i}$, $i=1,2$,  such that
\begin{align*}
&\inf_{A_i^{\rho_1,\rho_2}}
f_i(w_1,w_2,z_1,z_2)>M_i\rho_i,\\
&\sup_{\Omega^{s_1,s_2}}
f_i(w_1,w_2,z_1,z_2)<C_i,s_i\\
&\inf_{A_i^{\theta_1,\theta_2}}
f_i(w_1,w_2,z_1,z_2)>M_i\theta_i.
\end{align*}
Then the system~\eqref{do2} has at least two positive radial
solutions.
\end{thm}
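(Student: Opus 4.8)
The plan is to reuse verbatim the two index computations already carried out in the proof of Theorem~\ref{ellyptic} and to stitch three of them together by means of the additivity property of the fixed point index. Recall that in that proof a \emph{sup}-type hypothesis $\sup_{\Omega^{m_1,m_2}}f_i<C_i m_i$ was shown to force $i_K(T,K_{m_1,m_2})=1$, while an \emph{inf}-type hypothesis $\inf_{A_i^{m_1,m_2}}f_i>M_i m_i$ was shown to force $i_K(T,V_{m_1,m_2})=0$; in each case the argument in fact yields the stronger conclusion that $T$ is fixed-point-free on the corresponding boundary, so the indices are well defined. Applying these to the three hypotheses of the present statement (the first at level $(\rho_1,\rho_2)$, the second at level $(s_1,s_2)$, the third at level $(\theta_1,\theta_2)$), I would obtain
\begin{equation*}
i_K(T,V_{\rho_1,\rho_2})=0,\qquad i_K(T,K_{s_1,s_2})=1,\qquad i_K(T,V_{\theta_1,\theta_2})=0.
\end{equation*}

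Next I would record the nesting of these three sets, where the ordering hypotheses are used. From property $(P_1)$ we have $V_{\rho_1,\rho_2}\subset K_{\rho_1/a_1,\rho_2/a_2}$ and $K_{s_1,s_2}\subset V_{s_1,s_2}$, so that, combining with the strict inequalities $\rho_i/a_i<s_i<\theta_i$, one checks
\begin{equation*}
\overline{V_{\rho_1,\rho_2}}\subset K_{s_1,s_2}
\quad\text{and}\quad
\overline{K_{s_1,s_2}}\subset V_{\theta_1,\theta_2}.
\end{equation*}
Indeed, the first inclusion holds because $\overline{V_{\rho_1,\rho_2}}\subset\overline{K_{\rho_1/a_1,\rho_2/a_2}}$ consists of pairs $(u,v)\in K$ with $\|u\|_\infty\le\rho_1/a_1<s_1$ and $\|v\|_\infty\le\rho_2/a_2<s_2$, and the second because every $(u,v)\in\overline{K_{s_1,s_2}}$ satisfies $\min_{[a_i,b_i]}w_i\le\|w_i\|_\infty\le s_i<\theta_i$.

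With the three index values and the two strict inclusions in hand, I would apply the additivity (excision) property of the index on the two annular regions. On $K_{s_1,s_2}\setminus\overline{V_{\rho_1,\rho_2}}$ it gives
\begin{equation*}
i_K\bigl(T,K_{s_1,s_2}\setminus\overline{V_{\rho_1,\rho_2}}\bigr)=i_K(T,K_{s_1,s_2})-i_K(T,V_{\rho_1,\rho_2})=1-0=1,
\end{equation*}
while on $V_{\theta_1,\theta_2}\setminus\overline{K_{s_1,s_2}}$ it gives
\begin{equation*}
i_K\bigl(T,V_{\theta_1,\theta_2}\setminus\overline{K_{s_1,s_2}}\bigr)=i_K(T,V_{\theta_1,\theta_2})-i_K(T,K_{s_1,s_2})=0-1=-1.
\end{equation*}
Since both indices are nonzero, $T$ has a fixed point in each region; as one region lies inside $K_{s_1,s_2}$ and the other outside $\overline{K_{s_1,s_2}}$, these regions are disjoint and the two fixed points are distinct, each furnishing a positive radial solution of \eqref{do2}.

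I expect the only delicate point to be the bookkeeping of the closures in the nesting step, since the additivity property requires both that $T$ be fixed-point-free on $\partial V_{\rho_1,\rho_2}$, $\partial K_{s_1,s_2}$ and $\partial V_{\theta_1,\theta_2}$ (this is exactly what the contradiction arguments of Theorem~\ref{ellyptic} supply) and that the separations $\overline{V_{\rho_1,\rho_2}}\subset K_{s_1,s_2}$ and $\overline{K_{s_1,s_2}}\subset V_{\theta_1,\theta_2}$ be \emph{strict}; this is precisely why the hypotheses must impose the strict chain $\rho_i/a_i<s_i<\theta_i$ rather than mere orderings.
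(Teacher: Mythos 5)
Your proposal is correct and is exactly the argument the paper intends: the paper gives no separate proof of Theorem~\ref{ellyptic2}, stating only that it follows from the two index computations of Theorem~\ref{ellyptic} combined with the properties in Theorem~\ref{index}, which is precisely the additivity/nesting argument you carry out (including the correct reading of the evident typos $\rho_i/c_i$ for $\rho_i/a_i$ and $C_i,s_i$ for $C_is_i$). Your bookkeeping of the closures and of fixed-point-freeness on the three boundaries is sound, so nothing is missing.
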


\begin{ex} We note that Theorem ~\ref{ellyptic} and Theorem ~\ref{ellyptic2} can be apply when the nonlinearities $f_i$ are of the type
\begin{equation*}
f_i(u,v,|\nabla u|, |\nabla v|)=(\delta_i u^{\alpha_i}+\gamma_i v^{\beta_i})q_i(u,v,|\nabla u|, |\nabla v|)
\end{equation*}
with $q_i$ continuous functions bounded  by a strictly positive constant, $\alpha_i,\beta_i>1$ and suitable $\delta_i,\gamma_i \geq 0$.\\
For example, we can consider  in $\mathbb{R}^3$  the system of BVPs
\begin{gather}\label{ellbvpex}
\begin{cases}
&-\Delta u = \frac{1}{ |x|^4}\,(2-\sin(|\nabla u|^2+|\nabla v|^2)\,u^5 \text{ in } \Omega, \\
&-\Delta v= \frac{1}{ |x|^4}\frac{1}{\pi}\,\arctan\left(1+|\nabla u|^2+|\nabla v|^2\right)\,v^5\text{ in } \Omega,\\
&\displaystyle \lim_{|x|\to\infty}u(x)=u\left(\frac{1}{4}\right),\, \,\,\,\displaystyle 2 u-4\frac{\partial u}{\partial r}=u\left(\frac{1}{2}\right)\,\,\text{ for }|x|=1,\\
&\displaystyle\lim_{|x|\to\infty}v(x)=v\left(\frac{1}{4}\right),\,\,15v- \frac{\partial v}{\partial r}=5v\left(\frac{1}{2}\right)\,\,\text{ for }|x|=1.
\end{cases}
\end{gather}
Let $[a_1,b_1]=[a_2,b_2]=\left[\frac{1}{4},\frac{1}{2}\right]$;  by direct computation, we obtain $$D_i=\frac{c_i-1}{4(c_i+d_i)};$$
\begin{align*}
&\sup_{t\in[0,1]}\int_{0}^{1}k_i(t,s)g_i(s)ds=\frac{(c_i+2d_i)^2}{8(c_i+d_i)^2};\,\,\inf_{t\in\left[\frac{1}{4},\frac{1}{2}\right]}\int_{\frac{1}{4}}^{\frac{1}{2}}k_i(t,s)g_i(s)ds=\frac{5c_i+8d_i}{128 (c_i + d_i)};\\
&\int_0^1\alpha_i[k_i]g_i(s)ds=\frac{(3c_i+7d_i)}{32(c_i+d_i)}; \,\,\, \int_0^1\beta_i[k_i]g_i(s)ds=\frac{(c_i+3d_i)}{8(c_i+d_i)};\\
&\int_{a_i}^{b_i}\alpha_i[k_i]g_i(s)ds=\frac{5c_i+8d_i}{128 (c_i + d_i)};\,\,\int_{a_i}^{b_i}\beta_i[k_i]g_i(s)ds=\frac{3}{32 }\left(1 - \frac{c_i}{2 (c_i + d_i)}\right).
\end{align*}

Since in our example the mixed perturbed conditions stated that $c_1=2, d_1=4, c_2=3, d_2=\frac{1}{5}$ we easily compute $C_i$ and $M_i$ that becomes
\begin{align*}
C_1=\frac{9}{47},\,\, C_2=\frac{10240}{9561}, \,\, M_1=\frac{96}{41},\,\, M_2=\frac{10240}{1209}.
\end{align*}
Choosing $\rho_1=\frac{1}{3},\,\,\rho_2=\frac{1}{2},\,\, s_1=2,\,\,s_2=3$, one can note that, immediately, $\rho_i<\frac{1}{4}s_i$, for all $i=1,2$ and, moreover
\begin{align*}
&\sup_{\Omega^{\rho_1,\rho_2}}\, f_1\,\leq 3\rho_1^5=  \frac{1}{243}<\frac{3}{47}=C_1\rho_1, \,\,\,\,\, \inf_{A_1^{s_1,s_2}}  f_1\,\geq  s_1^5=32>\frac{192}{41}=M_1s_1,\\
&\sup _{\Omega^{\rho_1,\rho_2}} f_2\; \leq \frac{\rho_2^5}{2}=\frac{1}{64}<\frac{5120}{9561}=C_2\rho_2, \,\,\,\,\, \inf_{A_2^{s_1,s_2}}  f_2\,\geq  \frac{s_2^5}{4}=\frac{243}{4}>\frac{10240}{403}=M_2s_2,
\end{align*}
where supremum and infimum are computed on
\begin{align*}
&\Omega^{\frac{1}{3},\frac{1}{2}}= \left [0,\frac{1}{3}\right]\times\left[0,\frac{1}{2}\right]\times [0,+\infty)\times[0,+\infty);\\
&A_1^{2,3}= \left[2,8\right]\times[0,12]\times[0,+\infty)\times[0,+\infty);\\
&A_2^{2,3}= \left[0,8\right]\times [3,12]\times[0,+\infty)\times[0,+\infty).
\end{align*}
Then the hypotheses of Theorem~\ref{ellyptic} are
satisfied and hence the system~\eqref{ellbvpex} has at least one
positive solution.
\end{ex}

\section{Nonexistence results}
We now show a nonexistence result for the system of elliptic
equations \eqref{do2} when the the functions $f_i$ have an enough "small" or "large" growth.

\begin{thm}
Assume that one of following conditions holds:
\begin{equation}\label{cond1}
f_i(w_1,w_2,z_1,z_2)<C_i w_i\,,\,
w_i>0, \text{ for }i=1,2,
\end{equation}
\begin{equation}\label{cond2}
f_i(w_1,w_2, z_1,z_2)>M_i w_i\,\,,\
w_i>0, \text{ for }i=1,2.
\end{equation}
Then the only possible positive solution of the system \eqref{do2} is the zero one.
\end{thm}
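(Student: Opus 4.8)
The plan is to argue by contradiction, exploiting that positive radial solutions of \eqref{do2} correspond to nontrivial fixed points $(u,v)\in K$ of the operator $T$ of \eqref{opT}, and then to re-run the two index computations of Theorem~\ref{ellyptic} with the pointwise growth bounds \eqref{cond1}, \eqref{cond2} in place of the supremum/infimum over the stripes. Suppose $(u,v)\in K$ is a fixed point with $(u,v)\neq(0,0)$; then at least one component is nonzero, say $u\neq0$, so $\|u\|_\infty>0$ and, by the cone property of $K_1$, $u(s)\geq a_1\|u\|_\infty>0$ on $[a_1,b_1]$. It then suffices to derive a contradiction from each hypothesis using the index $i=1$, the case $v\neq0$ being identical with $i=2$.

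First, under \eqref{cond1}. The key preliminary remark is that, by continuity of $f_1$ and nonnegativity, letting $w_1\to0^+$ in \eqref{cond1} forces $f_1(0,w_2,z_1,z_2)=0$; hence the pointwise inequality $f_1(u(s),v(s),|u'(s)|/|r'(s)|,|v'(s)|/|r'(s)|)\leq C_1 u(s)$ holds for every $s\in[0,1]$, with strict inequality wherever $u(s)>0$. Writing $u=T_1(u,v)$, applying $\alpha_1,\beta_1$ and inverting $N_1$ exactly as in the index-$1$ part of the proof of Theorem~\ref{ellyptic} (here with $\lambda=1$), I represent $u(t)$ as the sum of the $\frac{1}{D_1}$-bracket and $\int_0^1 k_1(t,s)g_1(s)f_1\,ds$. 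Substituting $f_1\leq C_1 u(s)\leq C_1\|u\|_\infty$ and estimating $\gamma_1(t)\leq\|\gamma_1\|_\infty=1$, $\delta_1(t)\leq\|\delta_1\|_\infty$, and $\sup_t\int_0^1 k_1(t,s)g_1\,ds$, the bracketed quantity collapses exactly to $1/C_1$, so that $\|u\|_\infty\leq C_1\|u\|_\infty\cdot\frac1{C_1}=\|u\|_\infty$; the strict inequality $f_1<C_1\|u\|_\infty$ on $[a_1,b_1]$, where $g_1$ and $k_1$ are positive, upgrades this to $\|u\|_\infty<\|u\|_\infty$, a contradiction.

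Next, under \eqref{cond2}. Put $m_1:=\min_{[a_1,b_1]}u\geq a_1\|u\|_\infty>0$; then $f_1(u(s),\dots)>M_1 u(s)\geq M_1 m_1$ for all $s\in[a_1,b_1]$. Starting again from $u=T_1(u,v)$, inverting $N_1$ with the additive term absent and discarding the nonnegative integrals over $[0,1]\setminus[a_1,b_1]$, I obtain for $t\in[a_1,b_1]$ a lower bound for $u(t)$ by $M_1 m_1$ times the bracket defining $1/M_1$, after estimating $\gamma_1(t)\geq a_1$, $\delta_1(t)\geq a_1\|\delta_1\|_\infty$ and $\int_{a_1}^{b_1}k_1(t,s)g_1\,ds\geq\inf_{[a_1,b_1]}\int_{a_1}^{b_1}k_1 g_1\,ds$. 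Evaluating at a point $t_0$ with $u(t_0)=m_1$ and using the strict inequality from \eqref{cond2} yields $m_1=u(t_0)>M_1 m_1\cdot\frac1{M_1}=m_1$, again a contradiction. Hence $(u,v)=(0,0)$.

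The routine algebra, namely the $N_1$-inversion together with the $\gamma_i,\delta_i,k_i$ estimates, is identical to that of Theorem~\ref{ellyptic}, so the only genuinely new point, and the step to handle with care, is the passage from the nonstrict chain of inequalities to a \emph{strict} one. This rests on two observations: the continuity argument giving $f_i(0,\cdot)=0$, which makes the pointwise bound valid even where $u$ vanishes and dispenses with any monotonicity assumption on $f_i$; and the positivity of $g_i$ and $k_i$ on the subinterval $[a_i,b_i]$, which guarantees that the strict pointwise inequality for $f_i$ survives integration. I would also record the elementary reduction that a nontrivial solution must possess a nonzero component, so that the whole argument may be carried out with a single index $i$.
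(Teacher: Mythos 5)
Your proof is correct and follows essentially the same route as the paper: reduce to a nontrivial fixed point of $T$ in $K$, invert $N_1$ as in Theorem~\ref{ellyptic}, and bound the resulting representation of $u$ by the bracket defining $1/C_1$ (resp.\ $1/M_1$) to reach $\|u\|_\infty<\|u\|_\infty$ (resp.\ $\sigma>\sigma$). Your two added remarks --- that continuity forces $f_i(0,\cdot)=0$ so the pointwise bound holds even where $u$ vanishes, and that strictness survives integration because $u>0$ on $[a_1,b_1]$ where the kernel weight is positive --- are welcome refinements of details the paper passes over silently, but they do not change the argument.
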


\begin{proof}
Suppose that (\ref{cond1}) holds and assume that there exists a
solution $(\bar{u},\bar{v})$ of \eqref{do2}, $(\bar{u},\bar{v})\neq
(0,0)$; then $(u,v):=(\bar{u}\circ r,\bar{v}\circ r)$ is a fixed
point of $T$. Let, for example,  $\|(u,v)\|=\|u\| \leq\ 4\|u\|_{\infty}\neq 0$.\\
 Then, for $t\in [0,1]$, proceeding as in the proof of Theorem~\ref{ellyptic}, we have
 \begin{align*}
u(t)
&=\frac{1}{D_1}\left[\Bigl[\gamma_1(t)(1-\beta_1[\delta_1])+\delta_1(t)\beta_1[\gamma_1]\Bigr]\int_0^1\alpha_1[k_1]g_1(s)f_1\left(u(s),v(s),\frac{|u'(s)|}{|r'(s)|},\frac{|v'(s)|}{|r'(s)|}\right)ds \right.\\
&+\left.\Bigl[\gamma_1(t)\alpha_1[\delta_1]+\delta_1(t) (1-\alpha_1[\gamma_1])\Bigr]\int_0^1\beta_1[k_1]g_1(s)f_1\left(u(s),v(s),\frac{|u'(s)|}{|r'(s)|},\frac{|v'(s)|}{|r'(s)|}\right)ds\right]\\
&+\int_0^1k_1(t,s)g_1(s)f_1\left(u(s),v(s),\frac{|u'(s)|}{|r'(s)|},\frac{|v'(s)|}{|r'(s)|}\right)ds\\
&<C_1\left(\frac{1}{D_1}\left[\Bigl[(1-\beta_1[\delta_1])+\|\delta_1\|_{\infty}\beta_1[\gamma_1]\Bigr]\int_0^1\alpha_1[k_1]g_1(s)u(s)ds \right.\right.\\
&+\left.\Bigl[\alpha_1[\delta_1]+\|\delta_1\|_{\infty} (1-\alpha_1[\gamma_1])\Bigr]\int_0^1\beta_1[k_1]g_1(s)u(s)ds\right]
\left.+\int_0^1k_1(t,s)g_1(s)u(s)ds\right)\\
&\leq C_1\, \|u\|_{\infty}\left(\frac{1}{D_1}\left[\Bigl[(1-\beta_1[\delta_1])+\|\delta_1\|_{\infty}\beta_1[\gamma_1]\Bigr]\int_0^1\alpha_1[k_1]g_1(s)ds \right.\right.\\
&+\left.\Bigl[\alpha_1[\delta_1]+\|\delta_1\|_{\infty} (1-\alpha_1[\gamma_1])\Bigr]\int_0^1\beta_1[k_1]g_1(s)ds\right]
\left.+\int_0^1k_1(t,s)g_1(s)ds\right)
\end{align*}
 For $u>0$, taking the supremum for $t\in [0,1]$, we have $\|u\|_\infty<\|u\|_\infty$, a contradiction.\\
Suppose that (\ref{cond2}) holds and assume that there exists
$(u,v)\in K$ such that $(u,v)=T(u,v)$ and $(u,v)\neq (0,0)$. Let,
for example, $\|u\|_{\infty}\neq 0$; then
$\sigma:=\displaystyle\min_{t\in[a_1,b_1]}u(t)>0$ since $u \in K_1$. 
Thus, as in the proof of Theorem~\ref{ellyptic}, we have, for $t \in [a_1,b_1]$, 
\begin{align*}
u(t)&\geq\frac{1}{D_1}\left[\Bigl[a_1(1-\beta_1[\delta_1])+a_1\|\delta_1\|_{\infty}\beta_1[\gamma_1]\Bigr]\int_{a_1}^{b_1}\alpha_1[k_1]g_1(s)f_1\left(u(s),v(s),\frac{|u'(s)|}{|r'(s)|},\frac{|v'(s)|}{|r'(s)|}\right)ds \right.\\
&+\left.\Bigl[a_1\alpha_1[\delta_1]+a_1\|\delta_1\|_{\infty} (1-\alpha_1[\gamma_1])\Bigr]\int_{a_1}^{b_1}\beta_1[k_1]g_1(s)f_1\left(u(s),v(s),\frac{|u'(s)|}{|r'(s)|},\frac{|v'(s)|}{|r'(s)|}\right)ds\right]\\
&+\int_{a_1}^{b_1}k_1(t,s)g_1(s)f_1\left(u(s),v(s),\frac{|u'(s)|}{|r'(s)|},\frac{|v'(s)|}{|r'(s)|}\right)ds\\
&>M_1\left(\frac{1}{D_1}\left[\Bigl[a_1(1-\beta_1[\delta_1])+a_1\|\delta_1\|_{\infty}\beta_1[\gamma_1]\Bigr]\int_{a_1}^{b_1}\alpha_1[k_1]g_1(s)u(s)ds \right.\right.\\
&+\left.\Bigl[a_1\alpha_1[\delta_1]+a_1\|\delta_1\|_{\infty} (1-\alpha_1[\gamma_1])\Bigr]\int_{a_1}^{b_1}\beta_1[k_1]g_1(s)u(s)ds\right]
\left.+\int_{a_1}^{b_1}k_1(t,s)g_1(s)u(s)ds\right)\\
&\geq M_1\,\sigma \left(\frac{1}{D_1}\left[\Bigl[a_1(1-\beta_1[\delta_1])+a_1\|\delta_1\|_{\infty}\beta_1[\gamma_1]\Bigr]\int_{a_1}^{b_1}\alpha_1[k_1]g_1(s)ds \right.\right.\\
&+\left.\Bigl[a_1\alpha_1[\delta_1]+a_1\|\delta_1\|_{\infty} (1-\alpha_1[\gamma_1])\Bigr]\int_{a_1}^{b_1}\beta_1[k_1]g_1(s)ds\right]
\left.+\int_{a_1}^{b_1}k_1(t,s)g_1(s)ds\right).
\end{align*}

 For $u>0$, taking the infimum   for $t\in [a_1,b_1]$, we obtain $\sigma>\sigma$, a contradiction.
\end{proof}

\end{document}